\numberwithin{equation}{section}
\newtheorem{theorem}{Theorem}
\newtheorem{proposition}[theorem]{Proposition}
\newtheorem{lemma}[theorem]{Lemma}
\newtheorem{step}{Step}
\theoremstyle{definition}
\newtheorem{definition}[theorem]{Definition}
\theoremstyle{remark}
\renewcommand{\ker}{\operatorname{Ker}}
\newcommand{\Z}{\mathbb{Z}}
\newcommand{\Q}{\mathbb{Q}}
\newcommand{\R}{\mathbb{R}}
\newcommand{\C}{\mathbb{C}}
\newcommand{\G}{\Gamma}
\newcommand{\DG}{D/\Gamma}
\newcommand{\DU}{D/U(F)_{\mathbb{Z}}}
\newcommand{\UFZ}{U(F)_{\mathbb{Z}}}
\newcommand{\UFZZ}{U(F)_{\mathbb{Z}}^{+}}
\newcommand{\UFC}{U(F)_{\mathbb{C}}}
\newcommand{\UFcpt}{U(F)^{\Sigma_{F}}}
\newcommand{\GFZ}{\Gamma(F)_{\mathbb{Z}}}
\newcommand{\GFZbar}{\overline{\Gamma(F)}_{\mathbb{Z}}}
\newcommand{\GFRbar}{\overline{\Gamma(F)}_{\mathbb{R}}}
\newcommand{\DGcpt}{(D/\Gamma)^{\Sigma}}
\newcommand{\DUcpt}{(D/U(F)_{\mathbb{Z}})^{\Sigma_{F}}}
\DeclareMathOperator{\aut}{Aut}
\begin{document}

\title[]{Boundary branch divisor of toroidal compactifications}
\author[]{Shouhei Ma}
\thanks{Supported by KAKENHI 21H00971 and 20H00112.} 
\address{Department~of~Mathematics, Tokyo~Institute~of~Technology, Tokyo 152-8551, Japan}
\email{ma@math.titech.ac.jp}
\subjclass[2020]{14G35, 22E40, 11F99}
\keywords{} 

\begin{abstract}
We prove that any toroidal compactification of arithmetic quotient of Hermitian symmetric domain 
has no boundary branch divisor, 
in the setting where the algebraic group is of adjoint type. 
\end{abstract} 

\maketitle

\section{Introduction}\label{sec: intro}

Toroidal compactification introduced by Ash-Mumford-Rapoport-Tai \cite{AMRT} is one of 
fundamental compactifications of arithmetic quotients of Hermitian symmetric domains 
that has various nice properties and has found many applications. 
The aim of this note is to prove absence of the phenomenon of ``irregular cusp'' 
in any toroidal compactification when the action of the Lie group is effective, 
a property that has been known for some examples 
but seems to have never been noticed in this generality. 
 
We work in the setting of \cite{AMRT} Chapter III.5. 
Thus let $D$ be a Hermitian symmetric domain and 
$\mathbb{G}$ be a connected semi-simple linear algebraic group over ${\Q}$ such that 
$\mathbb{G}(\mathbb{R})^{\circ} = {\aut}(D)^{\circ} =\colon G$ 
and $D=G/K$. 
Here $\mathbb{G}(\mathbb{R})$ is the group of real points of $\mathbb{G}$, 
$\circ$ in the superscript means identity component as Lie group, 
and $K$ is a maximal compact subgroup of $G$. 
In particular, $G$ acts on $D$ effectively in this setting. 
Thus, for example, we are considering as $G$ Lie groups like 
$G={\rm PSL}(2, {\R}), {\rm PSp}(2g, {\R}), {\rm PSO}(2, n)^{\circ}, {\rm PU}(p, q)$ etc. 
Let ${\G}$ be an arithmetic subgroup of $G$. 
(We do not assume that ${\G}$ is neat.) 

A toroidal compactification of ${\DG}$, say ${\DGcpt}$, 
is defined by choosing a ${\G}$-admissible collection of fans $\Sigma=(\Sigma_{F})$, 
one for each rational boundary component $F$ of $D$. 
If ${\UFZ}={\G}\cap U(F)$ is the integral part of the center of the unipotent part of the stabilizer of $F$, 
the fan $\Sigma_{F}$ defines a partial compactification 
${\DU}\hookrightarrow {\DUcpt}$ 
in the direction of $F$ by means of relative torus embedding. 
(See \S \ref{ssec: toroidal recall} for more detail.) 
The partial compactification ${\DUcpt}$ 
is accessible and suitable for various kinds of analysis, not just the geometry of ${\DUcpt}$ itself, 
but also Fourier-Jacobi expansion of modular forms, extension of period mapping, and so on. 
We have the fundamental open holomorphic map  
${\DUcpt}\to {\DGcpt}$, 
which can be thought of as providing a natural local chart 
around the boundary points of ${\DGcpt}$ lying over $F$, 
in the same sense as $D\to {\DG}$ providing a natural local chart 
for the interior points ${\DG}\subset {\DGcpt}$. 

We prove the following. 

\begin{theorem}\label{main}
The holomorphic map ${\DUcpt}\to {\DGcpt}$ 
has no boundary divisor as a ramification divisor. 
\end{theorem}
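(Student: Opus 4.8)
The plan is to reduce Theorem~\ref{main} to the triviality of certain finite stabiliser groups, and then to derive that triviality from the effectiveness (adjointness) of the $G$-action by means of the weight decomposition $\mathfrak{g}=\mathfrak{g}_{-2}\oplus\mathfrak{g}_{-1}\oplus\mathfrak{g}_{0}\oplus\mathfrak{g}_{1}\oplus\mathfrak{g}_{2}$ of $\mathfrak{g}=\operatorname{Lie}\mathbb{G}$ attached to $F$, in which $\operatorname{Lie}U(F)=\mathfrak{g}_{2}$, $\operatorname{Lie}W(F)=\mathfrak{g}_{1}\oplus\mathfrak{g}_{2}$ and $\mathfrak{l}(F)=\mathfrak{g}_{0}$.

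The first step is to recall from \cite{AMRT}, Ch.~III, that over a neighbourhood of the part of $\partial\,\DGcpt$ lying above $F$ the map $\DUcpt\to\DGcpt$ is the quotient map for the properly discontinuous action of $\GFZbar$, and that the boundary divisors of $\DUcpt$ are exactly the divisors $\Delta_{\sigma}$ attached to the rays $\sigma$ of $\Sigma_{F}$. For the quotient of a smooth complex manifold by a group acting properly discontinuously, a prime divisor is a ramification divisor if and only if it is fixed pointwise by a non-trivial element of the group. Hence Theorem~\ref{main} is equivalent to: for every ray $\sigma$ of $\Sigma_{F}$, the subgroup $H_{\sigma}\subset\GFZbar$ fixing $\Delta_{\sigma}$ pointwise is trivial. (Any such $H_{\sigma}$ is finite, being contained in a point stabiliser.)

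The second, and main, step is to analyse an element $\gamma\in H_{\sigma}$. Choose a lift $\tilde\gamma\in\GFZ$, fix a Levi decomposition $N(F)=W(F)\rtimes L(F)$, and write $\tilde\gamma=w\gamma_{L}$ with $w\in W(F)$ and $\gamma_{L}\in L(F)$; since $\gamma$ has finite order in $\GFZbar$, so does $\gamma_{L}$, whence $\gamma_{L}$ is semisimple. I would then use the realisation of $\DU$ as a Siegel domain of the third kind, with coordinates $(z,u,t)$, $z\in\UFC$, $u\in V(F)_{\C}$, $t\in F$, together with the relative torus embedding in the $z$-direction, under which $\Delta_{\sigma}$ is a bundle over $F$ with fibre $O_{\sigma}\times V(F)_{\C}$, where $O_{\sigma}$ is the torus orbit of $\sigma$, a codimension-one subtorus of $T=\UFC/\UFZ$. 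Expressing that $\tilde\gamma$ fixes $\Delta_{\sigma}$ pointwise should then give: $\gamma_{L}$ acts trivially on $F$; the image of $w$ in $W(F)/U(F)$ vanishes, so $w\in U(F)$, and $\gamma_{L}$ acts trivially on $\mathfrak{g}_{1}$; and the linear action of $\gamma_{L}$ on $\UFC=\mathfrak{g}_{2}$ fixes the primitive generator $v$ of $\sigma$ and is the identity on $\mathfrak{g}_{2}/\C v$, hence --- $\gamma_{L}$ being semisimple --- is the identity on all of $\mathfrak{g}_{2}$. By invariance of the Killing form, $\Ad(\gamma_{L})$ is then the identity on $\mathfrak{g}_{i}$ for all $i\ne0$.

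Finally I would conclude. As $\gamma_{L}\in L(F)$, $\Ad(\gamma_{L})$ is the identity on the centre $\mathfrak{z}(\mathfrak{g}_{0})$; as $\Ad(\gamma_{L})$ is the identity on $\bigoplus_{i\ne0}\mathfrak{g}_{i}$, a short argument with the grading (the ideal of $\mathfrak{g}$ generated by $\bigoplus_{i\ne0}\mathfrak{g}_{i}$ meets $\mathfrak{g}_{0}$ in $[\mathfrak{g}_{1},\mathfrak{g}_{-1}]+[\mathfrak{g}_{2},\mathfrak{g}_{-2}]$, while its complement consists of simple ideals of $\mathfrak{g}$ acting faithfully on factors of $F$) shows, using in addition that $\gamma_{L}$ acts trivially on $F$ and that $G$ has no compact factors, that $\Ad(\gamma_{L})$ is the identity on all of $\mathfrak{g}$, i.e.\ $\gamma_{L}\in Z(G)$. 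Effectiveness of the $G$-action on $D$ gives $Z(G)=1$, so $\gamma_{L}=1$ and $\tilde\gamma=w\in U(F)\cap\G=\UFZ$; therefore $\gamma=1$ in $\GFZbar$. Hence $H_{\sigma}=1$ for every $\sigma$, and Theorem~\ref{main} follows. I expect the hardest part to be the second step: turning ``$\tilde\gamma$ fixes $\Delta_{\sigma}$ pointwise'' into the precise triviality statements above requires a careful description of the $N(F)$-action in the torus-embedding chart around $\Delta_{\sigma}$ and of the behaviour of the Jordan decomposition inside $N(F)=W(F)\rtimes L(F)$.
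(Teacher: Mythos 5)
Your reduction is the same as the paper's Step 1 (pass to the chart $\DUcpt$, use that $\DUcpt/\GFZbar\to\DGcpt$ is an isomorphism near the $F$-stratum, and characterize ramification by a nontrivial element of $\GFZbar$ fixing the divisor pointwise), with one small caveat: this only directly covers divisors whose general point lies in the $F$-stratum, i.e.\ rays with relative interior in $C(F)$; divisors corresponding to rays lying in some $C(F')$, $F'\succ F$, should be handled by passing to the chart for $F'$, as the paper does explicitly. After that, your route genuinely diverges from the paper's. You decompose a lift $\tilde\gamma=w\gamma_L$ via a Levi decomposition $N(F)=W(F)\rtimes L(F)$, extract from the pointwise fixing that $w\in U(F)$, that $\Ad(\gamma_L)$ is trivial on $\mathfrak{g}_1$ and (using semisimplicity, the fixed primitive generator $v$, and triviality on $U(F)/{\R}v$) on $\mathfrak{g}_2$, then propagate by the Killing form to $\mathfrak{g}_{-1},\mathfrak{g}_{-2}$, and finish with the grading/simple-ideal argument plus effectivity on the trivially graded factors and $Z(G)=1$; this is correct (the side claim that $\Ad(\gamma_L)$ fixes $\mathfrak{z}(\mathfrak{g}_0)$ is not needed and is anyway delicate if $L(F)$ is disconnected, but your ideal-generated-by-$\bigoplus_{i\ne0}\mathfrak{g}_i$ argument bypasses it). The paper instead stays entirely at the level of the fibration $D(F)/{\UFZ}\to D(F)'$: from triviality of $\Ad(\gamma)$ on $U(F)$ (obtained from the same equivariant map $\Phi\colon\Delta\to U(F)/{\R}v$ that you would need in your ``hardest part'') and triviality on $D(F)'$, it deduces that $\gamma$ acts on every torus fiber as one and the same translation $\alpha\in T(F)$, and then uses effectivity of the $\Gamma(F)^{\ast}/{\UFZ}$-action on $D(F)/{\UFZ}$ together with $(U(F)/{\UFZ})\cap\GFZbar=\{\mathrm{id}\}$ to conclude $\gamma=\alpha=\mathrm{id}$. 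So the paper never needs the Levi decomposition, the action on $\mathfrak{g}_1$ or $\mathfrak{g}_0$, the Killing form, or the decomposition into simple factors; your approach buys a very explicit structural description of a would-be ramifying element (useful if one wants to see exactly where adjointness enters factor by factor), at the price of precisely the chart-level bookkeeping you flag as the hard step --- which in the paper is replaced by the two soft equivariance statements for $\Delta\to D(F)'$ and $\Phi$, plus the translation/effectivity trick.
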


In some examples, this property has been observed individually by explicit calculation: 
for ${\rm PSp}(2g, {\R})$ in \cite{Ta} p.439, 
for\footnote{
Strictly speaking, ${\rm O}(2, n)$ is not connected as an algebraic group, 
but the construction of toroidal compactification still works.}
${\rm PO}^{+}(2, n)$ in \cite{Ko} Proposition 8.1 and \cite{GHS} Corollaries 2.22 and 2.29, 
and for ${\rm PU}(1, q)$ in \cite{Be} Corollary 4. 
We prove that this property always holds in full generality. 
The proof is intrinsic and uniform. 

To know (absence of) boundary branch divisor is 
a fundamental information in the study of the geometry of ${\DGcpt}$. 
For example, as the above list of papers implies, 
this is a necessary step in the construction of pluricanonical forms and more generally holomorphic tensors on ${\DGcpt}$. 
It is also necessary when calculating the vanishing order of ${\G}$-cusp forms 
along the boundary divisors of ${\DGcpt}$: 
by Theorem \ref{main}, this is equal to the vanishing order at the level of ${\DUcpt}$, 
which in turn is measured by the Fourier-Jacobi expansion. 
In particular, it is integral (as opposed to fractional). 

When ${\G}$ is neat, Theorem \ref{main} holds evidently because 
${\DUcpt}\to {\DGcpt}$ is unramified. 
For some applications there is no loss if one assumes ${\G}$ neat, 
but this is not always the case, 
especially when one wants to work in the direction of nonvanishing of some invariant of 
${\DG}$ itself, rather than of its cover. 

What is essential for Theorem \ref{main} to hold 
is the effectiveness of the $G$-action on $D$. 
In some situations, we have a natural cover $\pi \colon \tilde{G}\to G$, 
such as 
$\tilde{G} = {\rm SL}(2, {\R}), {\rm Sp}(2g, {\R}), {\rm SO}^{+}(2, n), {\rm SU}(p, q)$ etc, 
and the arithmetic group ${\G}$ in hand 
is defined as a subgroup of $\tilde{G}$. 
The kernel $G_{0}$ of $\tilde{G}\to G$ acts on $D$ trivially. 
There is one benefit in working with $\tilde{G}$, rather than with $G$: 
we have more weights of modular forms available. 
For example, odd weights for $\tilde{G} = {\rm SL}(2, {\R})$. 
In this situation of ${\G}\subset \tilde{G}$, 
if we work with $U(F)\subset \tilde{G}$, 
boundary branch divisor may arise. 
Classically this is known as \textit{irregular cusps} of modular curves, 
and such a phenomenon also happens in higher dimension (\cite{Ma}). 
Theorem \ref{main} does not mean absence of boundary branch divisor 
in this more general situation of ${\G}\subset \tilde{G}$. 
Instead, Theorem \ref{main} leads to a classification of 
the boundary branch divisors for ${\G}$ (Proposition \ref{prop: irregular}). 
It turns out that it is the finite abelian group $G_{0}/(G_{0}\cap {\G})$ that is essentially responsible 
for the presence of such divisors. 
This is apparent in the case of modular curves, 
and we find that this is still so in the general higher dimensional case. 
To understand what causes boundary ramification divisor 
for ${\G}\subset \tilde{G}$ is our motivation of this work. 
We explain this in \S \ref{sec: cover}.


\section{Proof}

In \S \ref{ssec: toroidal recall} we recall the construction of toroidal compactification 
following \cite{AMRT}. 
We will basically follow the notation in \cite{AMRT} Chapter III. 
In \S \ref{ssec: proof} we prove Theorem \ref{main}. 

\subsection{Toroidal compactification}\label{ssec: toroidal recall}

As in \S \ref{sec: intro}, let $D=G/K$ be a Hermitian symmetric domain 
where $G={\aut}(D)^{\circ}$, 
and $\mathbb{G}$ a connected semi-simple linear algebraic group over ${\Q}$ 
such that $\mathbb{G}({\R})^{\circ}=G$. 
Let $F$ be a rational boundary component of $D$. 
We denote by $N(F)$ the stabilizer of $F$ in $G$. 
Let $U(F)$ be the center of the unipotent radical of the identity component $N(F)^{\circ}$ of $N(F)$. 
Then $U(F)$ is an ${\R}$-linear space acted on by $N(F)$ by conjugation (\cite{AMRT} p.158). 
There is an open homogeneous cone $C(F)\subset U(F)$ acted on transitively by $N(F)$, 
which is self-adjoint with respect to certain natural metric on $U(F)$. 
If $F'\succ F$ is a rational boundary component containing $F$ in its closure, 
we have $U(F')\subset U(F)$. 
We write $C(F)^{\ast}\subset U(F)$ for the union of $C(F)$ and all $C(F')$ for such $F'$. 
For simplicity of exposition 
we also count $D$ as such a boundary component $F'$ for which $U(F')=\{ 0 \}$. 

We embed $D=G/K$ in its compact dual $\check{D}=G_{{\C}}/K_{{\C}}P_{-}$ and let 
$D(F)={\UFC}\cdot D \subset \check{D}$. 
We write $D(F)'=D(F)/{\UFC}$. 
Then there exists an $N(F)\cdot {\UFC}$-equivariant two-step holomorphic fibration 
\begin{equation*}
D(F) \to D(F)' \to F 
\end{equation*}
and an $N(F)\cdot {\UFC}$-equivariant map 
$\Phi \colon D(F) \to U(F)$ 
such that 
$D(F) \to D(F)'$ is a principal ${\UFC}$-bundle 
and $D=\Phi^{-1}(C(F))$ 
(\cite{AMRT} Chapter III.4.3 and p.158). 
Here, on the target $U(F)$ of $\Phi$, 
$N(F)$ acts by conjugation and 
$iU(F)\subset {\UFC}$ acts as translation by the imaginary part. 
This is the realization of $D$ as a Siegel domain of the third kind with respect to $F$. 

Let ${\G}$ be an arithmetic subgroup of $G$. 
We put 
${\GFZ}={\G}\cap N(F)$ and ${\UFZ}={\G}\cap U(F)$. 
We denote by $T(F)={\UFC}/{\UFZ}$ the algebraic torus associated to ${\UFZ}$. 
Then $D(F)/{\UFZ}$ is a principal $T(F)$-bundle over $D(F)'$. 
The map $\Phi$ descends to $D(F)/{\UFZ}\to U(F)$, 
still denoted by $\Phi$, 
and we have ${\DU}=\Phi^{-1}(C(F))$ in $D(F)/{\UFZ}$. 

Let $\Sigma=(\Sigma_{F})$ be a ${\G}$-admissible collection of fans 
in the sense of \cite{AMRT} Definition III.5.1. 
Each fan $\Sigma_{F}=(\sigma_{\alpha})$ is a rational polyhedral cone decomposition of 
$C(F)^{\ast}\subset U(F)$ preserved under the adjoint action of ${\GFZ}$. 
This defines a torus embedding 
$T(F)\hookrightarrow T(F)^{\Sigma_{F}}$. 
We take the relative torus embedding 
\begin{equation*}
(D(F)/{\UFZ})^{\Sigma_{F}} = (D(F)/{\UFZ}) \times_{T(F)} T(F)^{\Sigma_{F}} 
\end{equation*}
over $D(F)'$. 
Recall (\cite{AMRT} Chapter I.1) that the fan $\Sigma_{F}$ also defines 
a partial compactification ${\UFcpt}$ of $U(F)$ 
whose boundary is stratified into $U(F)$-orbits $U(F)/L(\sigma)$, $\sigma\in \Sigma_{F}$, 
where $L(\sigma)\subset U(F)$ is the linear span of $\sigma$.  
Then $\Phi$ extends continuously to  
\begin{equation*}
\Phi: (D(F)/{\UFZ})^{\Sigma_{F}} \to {\UFcpt}. 
\end{equation*}

Let ${\DUcpt}$ be the interior of the closure of ${\DU}$ in $(D(F)/{\UFZ})^{\Sigma_{F}}$. 
If $C(F)''\subset {\UFcpt}$ is the interior of the closure of $C(F)$ in ${\UFcpt}$, 
then (\cite{AMRT} p.161) 
\begin{equation*}
{\DUcpt} = \Phi^{-1}(C(F)'') \quad \subset \; \; (D(F)/{\UFZ})^{\Sigma_{F}}.  
\end{equation*}
The boundary of ${\DUcpt}$ is stratified into loci indexed by the cones $\sigma$ in $\Sigma_{F}$. 
When the relative interior of $\sigma$ is contained in $C(F)$, 
the boundary stratum $U(F)/L(\sigma)$ of ${\UFcpt}$ is contained in $C(F)''$, 
and the corresponding boundary stratum of ${\DUcpt}$ is $\Phi^{-1}(U(F)/L(\sigma))$. 
This is a principal bundle over $D(F)'$ 
for the quotient torus of $T(F)$ associated to the quotient lattice 
${\UFZ}/({\UFZ}\cap L(\sigma))$. 
The union of all such boundary strata of ${\DUcpt}$ 
is called the \textit{$F$-stratum} (\cite{AMRT} p.164).  

The toroidal compactification of ${\DG}$ associated to $\Sigma$ 
is defined by the gluing 
\begin{equation}\label{eqn: glue}
{\DGcpt} = ( \sqcup_{F} {\DUcpt})/\sim  
\end{equation}
where $F$ ranges over all rational boundary components of $D$ 
(including $D$ itself for which $U(F)$ is trivial), 
and $\sim$ is the equivalence relation generated by the following maps: 
\begin{itemize} 
\item Action of $\gamma \in {\G}$ giving 
${\DUcpt}\to (D/U(\gamma F)_{{\Z}})^{\Sigma_{\gamma F}}$. 
\item Gluing map $(D/U(F')_{{\Z}})^{\Sigma_{F'}}\to {\DUcpt}$ for $F\prec F'$. 
\end{itemize}
Then ${\DGcpt}$ is a compact Moishezon space containing ${\DG}$ as a Zariski open set 
and having a morphism to the Baily-Borel compactification of ${\DG}$. 
We have a natural open holomorphic map 
${\DUcpt}\to {\DGcpt}$. 
Points in the $F$-stratum of ${\DUcpt}$ are mapped to 
points in the image of $F$ in the Baily-Borel compactification.

\subsection{Proof of Theorem \ref{main}}\label{ssec: proof}

Now we prove Theorem \ref{main} in four steps. 
We take the quotient group 
${\GFZbar}={\GFZ}/{\UFZ}$, 
which makes sense because ${\UFZ}$ is normal in ${\GFZ}$. 
This group acts on ${\DUcpt}$ naturally. 
We begin with the following reduction. 

\begin{step}\label{step1}
For the proof of Theorem \ref{main} it suffices to prove the following: 

If an element $\gamma\in {\GFZbar}$ fixes a divisor in the boundary of ${\DUcpt}$ 
whose general point belongs to the $F$-stratum, then $\gamma={\rm id}$. 
\end{step}

\begin{proof}
First, for the proof of Theorem \ref{main}, 
it is sufficient to consider only boundary divisors whose general point belongs to the $F$-stratum. 
Indeed, a point in the boundary of ${\DUcpt}$ which does not belong to the $F$-stratum 
is contained in the image of the gluing map 
\begin{equation*}
p_{F'}\colon (D/U(F')_{{\Z}})^{\Sigma_{F'}}\to {\DUcpt} 
\end{equation*}
for some $F'\succ F$. 
The composition of ${\DUcpt}\to {\DGcpt}$ with $p_{F'}$ 
coincides with the natural map 
$(D/U(F')_{{\Z}})^{\Sigma_{F'}}\to {\DGcpt}$ for $F'$. 
Therefore the assertion for boundary divisors whose general point is not in the $F$-stratum 
follows from the corresponding assertion for some $F'\succ F$. 

The holomorphic map ${\DUcpt}\to {\DGcpt}$ factorizes as 
\begin{equation*}
{\DUcpt} \to {\DUcpt}/{\GFZbar} \to {\DGcpt}. 
\end{equation*}
By \cite{AMRT} p.175, the second map 
${\DUcpt}/{\GFZbar} \to {\DGcpt}$ is isomorphic in an open neighborhood of the $F$-stratum. 
This implies that Theorem \ref{main} follows if we could show that the first map 
${\DUcpt} \to {\DUcpt}/{\GFZbar}$ 
has no boundary divisor over $F$ as a ramification divisor. 
\end{proof}

Let $\gamma\in {\GFZbar}$ and $\Delta$ be an irreducible component of the boundary divisor of 
${\DUcpt}$ which is fixed by $\gamma$ and whose general point belongs to the $F$-stratum. 
We want to show that $\gamma={\rm id}$. 
Note first that $\gamma$ must be of finite order 
because the action of ${\GFZbar}$ on ${\DUcpt}$ is properly discontinuous 
(\cite{AMRT} Proposition III.6.10). 

\begin{step}\label{step2}
The adjoint action of $\gamma$ on $U(F)$ is trivial. 
\end{step}

\begin{proof}
The boundary divisor $\Delta$ corresponds to a ray 
$\sigma={\R}_{\geq 0} v$ in $\Sigma_{F}$ 
whose relative interior ${\R}_{>0} v$ is contained in $C(F)$. 
We choose as the generator $v$ a primitive vector of ${\UFZ}$. 
We shrink $\Delta$ to its Zariski open stratum so that 
$\Delta \to D(F)'$ is a principal bundle for the quotient torus of $T(F)$ 
associated to ${\UFZ}/{\Z}v$. 
Since we have a ${\GFZbar}$-equivariant map 
\begin{equation*}
\Phi : \Delta \twoheadrightarrow U(F)/{\R}v \; \; \subset \; {\UFcpt}, 
\end{equation*}
we see that the adjoint action of $\gamma$ on $U(F)$ 
preserves ${\R}v$ and acts on $U(F)/{\R}v$ trivially. 
Moreover, since $\gamma$ preserves the lattice ${\UFZ}$ and the cone $C(F)$, 
it must fix the unique primitive integral generator $v$ of $\sigma$. 
Thus $\gamma$ also acts on ${\R}v$ trivially. 
Since $\gamma$ is of finite order, we conclude that $\gamma$ acts on $U(F)$ trivially. 
\end{proof}

We can also prove Step \ref{step2} by looking at the $T(F)$-action on $\Delta$, 
instead of looking at $\Phi$. 
The next step is the key step.

\begin{step}\label{step3}
There exists an element $\alpha$ of $T(F)$ of finite order such that 
the $\gamma$-action on $D(F)/{\UFZ}$ coincides with 
the relative translation by $\alpha$ on every fiber of $\pi \colon D(F)/{\UFZ}\to D(F)'$.  
\end{step}

\begin{proof}
Since we have a ${\GFZbar}$-equivariant surjective map $\Delta\to D(F)'$, 
we find that $\gamma$ acts on $D(F)'$ trivially. 
Hence $\gamma$ preserves each fiber $\pi^{-1}(x)\simeq T(F)$ of $\pi$. 
Let $\gamma_{x}\colon \pi^{-1}(x) \to \pi^{-1}(x)$ be the action of $\gamma$ on $\pi^{-1}(x)$.  
By Step \ref{step2}, $\gamma$ commutes with every element of ${\UFC}$. 
Therefore the map $\gamma_{x}$ commutes with 
the translation by $T(F)$ on $\pi^{-1}(x)\simeq T(F)$. 
This shows that $\gamma_{x}$ itself is a translation, say by $\alpha(x)\in T(F)$. 
Since $\alpha(x)\in T(F)$ depends continuously on $x\in D(F)'$ 
and at the same time is of finite order at every $x$, 
we find that $\alpha(x)$ is constant for $x$. 
This proves our assertion. 
\end{proof}

\begin{step}\label{step4}
$\gamma={\rm id}$. 
\end{step}

\begin{proof}
We consider, as an auxiliary group, 
the normalizer of the lattice ${\UFZ}$ in $N(F)$ and denote it by $\Gamma(F)^{\ast}\subset N(F)$. 
By definition we have ${\UFZ}\lhd \Gamma(F)^{\ast}$. 
Note that $U(F)$ and ${\GFZ}$ are contained in $\Gamma(F)^{\ast}$. 
We put ${\GFRbar}=\Gamma(F)^{\ast}/{\UFZ}$. 
Then $U(F)/{\UFZ}$ and ${\GFZbar}$ are subgroups of ${\GFRbar}$, 
and we have 
\begin{equation}\label{eqn: trivial intersection}
(U(F)/{\UFZ}) \cap {\GFZbar} = \{ {\rm id} \} 
\end{equation}
in ${\GFRbar}$ 
by the definition ${\UFZ}=U(F)\cap {\GFZ}$ of ${\UFZ}$. 

The group ${\GFRbar}$ acts on $D(F)/{\UFZ}$. 
Since $N(F)$ acts on $D(F)$ effectively by our initial setting, 
and since ${\UFZ}$ is discrete and acts on $D(F)$ freely, 
we see that the action of ${\GFRbar}$ on $D(F)/{\UFZ}$ is also effective. 
By Step \ref{step3}, we know that the elements  
$\gamma\in {\GFZbar}$ and $\alpha \in U(F)/{\UFZ}$ 
have the same action on $D(F)/{\UFZ}$. 
Therefore we find that $\gamma = \alpha$ as elements of ${\GFRbar}$ 
by the effectivity of the action. 
By \eqref{eqn: trivial intersection}, we conclude that $\gamma={\rm id}$. 
This completes the proof of Theorem \ref{main}. 
\end{proof}

\section{Covering group}\label{sec: cover}

In this section we consider the more general case where 
${\G}$ is defined as a subgroup of a covering group of ${\aut}(D)^{\circ}$. 
Theorem \ref{main} leads to a classification of 
the boundary branch divisors of ${\DGcpt}$ in this situation 
(Proposition \ref{prop: irregular}).

\subsection{Toroidal compactification with sublattices}\label{ssec: sublattice}

In this subsection, which is the preliminary for the next \S \ref{ssec: ineffective kernel}, 
we work in the setting of \S \ref{ssec: toroidal recall}. 
Thus ${\G}$ is an arithmetic subgroup of $G={\aut}(D)^{\circ}$. 
Suppose that a finite-index sublattice ${\UFZZ}\subset {\UFZ}$ is given 
for each rational boundary component $F$, satisfying the following conditions: 
\begin{itemize}
\item $U(F')_{{\Z}}^{+} = U(F')\cap {\UFZZ}$ when $F'\succ F$. 
\item $U(\gamma F)_{{\Z}}^{+} = \gamma {\UFZZ} \gamma^{-1}$ for $\gamma \in {\G}$. 
In particular, ${\UFZZ}$ is preserved under the adjoint action of ${\GFZ}$ on ${\UFZ}$. 
\end{itemize}
Let $A_{F}={\UFZ}/{\UFZZ}$ and $T(F)_{+}={\UFC}/{\UFZZ}$. 
Since the fan $\Sigma_{F}$ is also rational with respect to ${\UFZZ}$, 
it defines a torus embedding 
$T(F)_{+}^{\Sigma_{F}}$ of $T(F)_{+}$, 
and we have 
$T(F)^{\Sigma_{F}}=T(F)_{+}^{\Sigma_{F}}/A_{F}$. 
This defines the partial compactification 
$(D/{\UFZZ})^{\Sigma_{F}}$ of $D/{\UFZZ}$. 
Working with ${\UFZZ}$ in place of ${\UFZ}$, 
we can define the toroidal compactification of ${\DG}$ with $({\UFZZ})_{F}$ 
by the same gluing procedure as \eqref{eqn: glue}, 
covered by the partial compactifications $(D/{\UFZZ})^{\Sigma_{F}}$. 
The result is still naturally isomorphic to the usual ${\DGcpt}$ with $({\UFZ})_{F}$, 
as can be seen from the fact that 
the natural map between their local models around the $F$-stratum  
\begin{equation*}
(D/{\UFZZ})^{\Sigma_{F}}/({\GFZ}/{\UFZZ}) \to
{\DUcpt}/{\GFZbar}  
\end{equation*}
is isomorphic. 
Here note that 
$(D/{\UFZZ})^{\Sigma_{F}}/A_{F} = {\DUcpt}$. 

\begin{lemma}\label{lem: sublattice}
Let $\Delta$ be a boundary divisor of $(D/{\UFZZ})^{\Sigma_{F}}$ 
corresponding to a ray $\sigma$ of $\Sigma_{F}$. 
The map  
$(D/{\UFZZ})^{\Sigma_{F}} \to {\DGcpt}$ 
is ramified along $\Delta$ if and only if 
$\sigma \cap {\UFZ} \ne \sigma \cap {\UFZZ}$, 
with the ramification caused by the cyclic subgroup 
$({\R}\sigma \cap {\UFZ})/({\R}\sigma \cap {\UFZZ})$ 
of $A_{F}$. 
\end{lemma}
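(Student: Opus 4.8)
The plan is to reduce Lemma \ref{lem: sublattice} to the analysis already carried out in Theorem \ref{main} and its four-step proof, keeping careful track of the lattice $\UFZ$ versus the sublattice $\UFZZ$. First I would factor the map in question as
\begin{equation*}
(D/\UFZZ)^{\Sigma_{F}} \to (D/\UFZ)^{\Sigma_{F}} = \DUcpt \to \DGcpt,
\end{equation*}
where the first arrow is the quotient by $A_{F} = \UFZ/\UFZZ$ (using the identity $(D/\UFZZ)^{\Sigma_{F}}/A_{F} = \DUcpt$ recorded just before the lemma) and the second arrow is the one treated in Theorem \ref{main}. By Theorem \ref{main}, the second arrow is unramified along every boundary divisor whose general point lies in the $F$-stratum, and by the reduction in Step \ref{step1} the boundary divisors not meeting the $F$-stratum of $\DUcpt$ are handled by passing to some $F' \succ F$, where the compatibility condition $U(F')_{\Z}^{+} = U(F') \cap \UFZZ$ lets one run the same argument. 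So the ramification of the composite along $\Delta$ is entirely governed by the first arrow, the finite quotient map by $A_{F}$.

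Next I would compute the stabilizer in $A_{F}$ of the boundary divisor $\Delta$ of $(D/\UFZZ)^{\Sigma_{F}}$ attached to the ray $\sigma$. Working torus-locally as in Step \ref{step2} and Step \ref{step3}: after shrinking $\Delta$ to its open stratum, $\Delta$ is a principal bundle over $D(F)'$ for the quotient torus of $T(F)_{+} = \UFC/\UFZZ$ by the lattice $\UFZZ/(\UFZZ \cap L(\sigma))$, where $L(\sigma) = \R\sigma$. The group $A_{F}$ acts by translations in the fibers (this is the content of the torus-quotient picture, and it is the analogue of Step \ref{step3}), so an element $a \in A_{F}$ fixes $\Delta$ pointwise on its open stratum precisely when its image in $T(F)_{+}/(\text{the fiber torus})$ is trivial, i.e. when $a$ lies in the image of $\R\sigma \cap \UFZ$ in $A_{F} = \UFZ/\UFZZ$. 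This image is exactly the cyclic group $(\R\sigma \cap \UFZ)/(\R\sigma \cap \UFZZ)$, and it is nontrivial if and only if $\sigma \cap \UFZ \ne \sigma \cap \UFZZ$ (note $\sigma \cap \UFZ$ and $\R\sigma \cap \UFZ$ differ only by whether one keeps the origin's ``sign'', so they give the same subgroup of $A_{F}$); cyclicity follows because $\R\sigma$ is one-dimensional, so $\R\sigma \cap \UFZ$ and $\R\sigma \cap \UFZZ$ are both infinite cyclic with the former containing the latter with finite index.

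Finally I would assemble the ``if and only if'': the quotient map $(D/\UFZZ)^{\Sigma_{F}} \to (D/\UFZZ)^{\Sigma_{F}}/A_{F} = \DUcpt$ is ramified along $\Delta$ exactly when the pointwise stabilizer of the open stratum of $\Delta$ in $A_{F}$ is nontrivial, which by the previous paragraph is exactly the condition $\sigma \cap \UFZ \ne \sigma \cap \UFZZ$; and since the subsequent map $\DUcpt \to \DGcpt$ is unramified along the image of $\Delta$ by Theorem \ref{main}, the composite $(D/\UFZZ)^{\Sigma_{F}} \to \DGcpt$ is ramified along $\Delta$ under precisely the same condition, with ramification group the cyclic subgroup named in the statement. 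The one point requiring genuine care — the main obstacle — is the second paragraph: justifying that $A_{F}$ acts on $\Delta$ purely by fiberwise translations and that only the part of $A_{F}$ coming from $\R\sigma$ can stabilize $\Delta$ pointwise. This is the direct analogue of Steps \ref{step2}–\ref{step3}, but one must verify it for a nontrivial-stabilizer element of the \emph{finite} group $A_{F}$ (which acts on $U(F)$ trivially by construction, so Step \ref{step2} is automatic here) rather than for an element of $\GFZbar$; the finiteness-plus-continuity argument of Step \ref{step3} then pins down the translation parameter, and the effectiveness argument of Step \ref{step4} is not needed because here we are deliberately quotienting by a group that does not act effectively on $D(F)/\UFZZ$.
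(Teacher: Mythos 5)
Your proposal is correct and takes essentially the same route as the paper: factor the map through ${\DUcpt}$, use Theorem \ref{main} to rule out ramification of ${\DUcpt}\to{\DGcpt}$ along boundary divisors, and reduce the first arrow to the toric quotient by $A_{F}$, i.e.\ to $T(F)_{+}^{\Sigma_{F}}\to T(F)^{\Sigma_{F}}$. The only difference is that you spell out the stabilizer computation for the ray $\sigma$ which the paper leaves implicit as standard toric geometry.
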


\begin{proof}
Since $(D/{\UFZZ})^{\Sigma_{F}} \to {\DGcpt}$ factorizes as 
\begin{equation*}
(D/{\UFZZ})^{\Sigma_{F}} \to {\DUcpt} \to {\DGcpt}, 
\end{equation*}
Theorem \ref{main} shows that 
the boundary ramification divisors of the map $(D/{\UFZZ})^{\Sigma_{F}} \to {\DGcpt}$ 
are the same as those of 
$(D/{\UFZZ})^{\Sigma_{F}} \to {\DUcpt}$,  
which in turn correspond to those of 
$T(F)_{+}^{\Sigma_{F}}\to T(F)^{\Sigma_{F}}$.  
\end{proof}

\subsection{Effect of ineffective kernel}\label{ssec: ineffective kernel}

Let $G=\mathbb{G}({\R})^{\circ}={\aut}(D)^{\circ}$ 
be as in \S \ref{ssec: toroidal recall}. 
Let $\pi \colon \tilde{G}\to G$ be a finite covering of Lie groups. 
$\tilde{G}$ is a connected Lie group, not assumed to be associated to an algebraic group over ${\Q}$. 
The kernel $G_{0}={\ker}(\pi)$ of $\pi$ is the center of $\tilde{G}$, 
and $\tilde{G}$ acts on $D$ with ineffective kernel $G_{0}$. 
For $U(F)\subset G$ we denote by 
$U(F)^{+}\subset \tilde{G}$ the identity component of $\pi^{-1}(U(F))$. 
Then $U(F)^{+}\to U(F)$ is isomorphic and 
$\pi^{-1}(U(F)) = U(F)^{+}\times G_{0}$. 
$U(F)^{+}$ is what we should write ``$U(F)$'' for $\tilde{G}$. 

Let now ${\G}$ be an arithmetic subgroup of $\tilde{G}$, 
namely $\pi({\G})\subset G$ arithmetic. 
We write 
${\UFZ}={\G}\cap U(F)^{+}$ and 
$U(F)_{{\Z}}'={\G}\cap \pi^{-1}(U(F))$. 
We have 
\begin{equation*}
{\UFZ} \: \subset \: {\UFZ}\times (G_{0}\cap {\G}) \: \subset \: U(F)_{{\Z}}'. 
\end{equation*}
Note that $\pi(U(F)_{{\Z}}')\subset G$ is what we previously wrote ``${\UFZ}$'' for $\pi({\G})\subset G$. 
(We want to stick to the notation ${\UFZ}$ for the lattice of unipotent elements 
in the given arithmetic group ${\G}$.) 
It is necessary to work with ${\G}\subset \tilde{G}$, 
rather than with $\pi({\G})\subset G$, 
when considering modular forms of (say) odd weight. 
We must work with ${\UFZ}\subset U(F)^{+}$, 
rather than with $U(F)_{{\Z}}'$, 
when considering Fourier-Jacobi expansion of such modular forms. 

Let $\Sigma=(\Sigma_{F})$ be a ${\G}$-admissible collection of fans,  
where each $\Sigma_{F}$ is a fan in $U(F)^{+}\simeq U(F)$. 
It is possible to construct 
the toroidal compactification ${\DGcpt}$ of ${\DG}$  
with the lattices ${\UFZ}\subset U(F)^{+}$. 
This is the same as the toroidal compactification of $D/\pi({\G})$ with 
the sublattices $(\pi({\UFZ}))_{F}$ of $(\pi(U(F)_{{\Z}}'))_{F}$, 
hence with $(\pi(U(F)_{{\Z}}'))_{F}$, 
which satisfies the conditions in \S \ref{ssec: sublattice}. 

\begin{proposition}\label{prop: irregular}
For ${\G}\subset \tilde{G}$ the natural map 
$(D/{\UFZ})^{\Sigma_{F}}\to {\DGcpt}$ 
is ramified along a boundary divisor 
corresponding to a ray $\sigma\in \Sigma_{F}$ if and only if 
$\sigma\cap \pi({\UFZ}) \ne \sigma\cap \pi(U(F)_{{\Z}}')$, 
with the ramification caused by the cyclic subgroup 
$({\R}\sigma\cap \pi(U(F)_{{\Z}}'))/({\R}\sigma\cap \pi({\UFZ}))$ 
of $G_{0}/(G_{0}\cap {\G})$. 
\end{proposition}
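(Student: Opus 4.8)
The plan is to reduce Proposition~\ref{prop: irregular} to Lemma~\ref{lem: sublattice} by identifying the covering-group situation with the sublattice situation of \S\ref{ssec: sublattice}. First I would set $\G' = \pi(\G) \subset G$, which is an arithmetic subgroup of $G = \aut(D)^{\circ}$ by hypothesis, and consider the collection of lattices $U(F)_{\Z}^{+} := \pi(\UFZ)$ inside $U(F) \cong U(F)^{+}$. These sit as finite-index sublattices of $U(F)_{\Z} := \pi(U(F)_{\Z}')$, the latter being exactly the unipotent lattice of $\G'$ at $F$ in the sense of \S\ref{ssec: toroidal recall}. I would check that the two compatibility conditions of \S\ref{ssec: sublattice} hold for the family $(U(F)_{\Z}^{+})_{F}$: the condition $U(F')_{\Z}^{+} = U(F') \cap U(F)_{\Z}^{+}$ for $F' \succ F$, and $U(\gamma F)_{\Z}^{+} = \gamma U(F)_{\Z}^{+} \gamma^{-1}$ for $\gamma \in \G'$. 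Both follow by applying $\pi$ to the corresponding relations among $\G \cap U(F')^{+}$, $\G \cap U(F)^{+}$, etc., using that $\pi$ restricted to $U(F)^{+}$ is an isomorphism onto $U(F)$ and that $\G$ is normalized appropriately; this is essentially bookkeeping with the decomposition $\pi^{-1}(U(F)) = U(F)^{+} \times G_{0}$.

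Next I would invoke the remark already made in the paragraph preceding Proposition~\ref{prop: irregular}: the toroidal compactification of $\DG$ built from the lattices $\UFZ \subset U(F)^{+}$ coincides with the toroidal compactification of $D/\G'$ built from the sublattices $(U(F)_{\Z}^{+})_{F}$ of $(U(F)_{\Z})_{F}$, and the natural partial compactification $(D/\UFZ)^{\Sigma_{F}}$ is identified with $(D/U(F)_{\Z}^{+})^{\Sigma_{F}}$ in the notation of \S\ref{ssec: sublattice}. Under this identification the map $(D/\UFZ)^{\Sigma_{F}} \to \DGcpt$ of the Proposition is precisely the map $(D/U(F)_{\Z}^{+})^{\Sigma_{F}} \to \DGcpt$ of Lemma~\ref{lem: sublattice}. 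Applying the Lemma directly, the map is ramified along the boundary divisor corresponding to a ray $\sigma \in \Sigma_{F}$ if and only if $\sigma \cap U(F)_{\Z} \ne \sigma \cap U(F)_{\Z}^{+}$, i.e. $\sigma \cap \pi(U(F)_{\Z}') \ne \sigma \cap \pi(\UFZ)$, which is the stated criterion; and the ramification is caused by the cyclic group $(\R\sigma \cap U(F)_{\Z})/(\R\sigma \cap U(F)_{\Z}^{+}) = (\R\sigma \cap \pi(U(F)_{\Z}'))/(\R\sigma \cap \pi(\UFZ))$, a subgroup of $A_{F} = U(F)_{\Z}/U(F)_{\Z}^{+}$.

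It remains to identify the ambient group $A_{F} = \pi(U(F)_{\Z}')/\pi(\UFZ)$ with the finite abelian group $G_{0}/(G_{0} \cap \G)$, so that the cyclic subgroup causing the ramification is displayed as a subgroup of the latter, as asserted. For this I would use that $\pi^{-1}(U(F)) = U(F)^{+} \times G_{0}$ and that $\ker(\pi) = G_{0}$ is central, so $\pi$ induces an isomorphism $U(F)_{\Z}'/(U(F)_{\Z}' \cap G_{0}) \xrightarrow{\sim} \pi(U(F)_{\Z}')$; combined with $\UFZ = U(F)_{\Z}' \cap U(F)^{+}$ and $U(F)_{\Z}' \cap G_{0} = \G \cap G_{0}$, a short diagram chase on the inclusions $\UFZ \subset \UFZ \times (G_{0} \cap \G) \subset U(F)_{\Z}'$ (which appears in the excerpt) gives a canonical isomorphism $\pi(U(F)_{\Z}')/\pi(\UFZ) \cong (G_{0} \cap \G)\cdot$something, and in fact one sees $A_{F} \cong U(F)_{\Z}'/(\UFZ \times (G_{0}\cap\G)) \hookrightarrow$ does not quite work on the nose; the clean statement is that $A_{F}$ embeds into $G_{0}/(G_{0}\cap\G)$ via the second projection $\pi^{-1}(U(F)) = U(F)^{+}\times G_{0} \to G_{0}$, since any element of $U(F)_{\Z}'$ whose $U(F)^{+}$-component lies in $\UFZ$ differs from an element of $\UFZ$ by an element of $G_{0} \cap \G$ precisely when its $G_{0}$-component lies in $G_{0}\cap\G$.

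The main obstacle I anticipate is precisely this last identification: one must be careful that $A_{F} = \pi(U(F)_{\Z}')/\pi(\UFZ)$ is not literally a quotient of $G_{0}$ but rather naturally injects into $G_{0}/(G_{0}\cap\G)$, and the Proposition's phrasing ``subgroup of $G_{0}/(G_{0}\cap\G)$'' should be read through this injection. The verification that the second-projection map $U(F)_{\Z}' \to G_{0}$ descends to an injection $A_{F} \hookrightarrow G_{0}/(G_{0}\cap\G)$ — i.e. that its kernel is exactly $\pi(\UFZ)$ pulled back, equivalently $\UFZ \times (G_{0}\cap\G)$ — is the one genuinely non-formal point, and it rests on the direct-product decomposition $\pi^{-1}(U(F)) = U(F)^{+}\times G_{0}$ together with the definition $\UFZ = \G \cap U(F)^{+}$. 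Everything else is a transcription of Lemma~\ref{lem: sublattice} through dictionary entries already supplied in \S\ref{ssec: ineffective kernel}.
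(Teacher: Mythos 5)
Your proposal is correct and follows essentially the same route as the paper: reduce to Lemma~\ref{lem: sublattice} via the identification (already set up in the paragraph preceding the Proposition) of the covering-group situation with the sublattice situation, and then identify $A_{F}=\pi(U(F)_{\Z}')/\pi(\UFZ)\simeq U(F)_{\Z}'/(\UFZ\times(G_{0}\cap\G))$ as a subgroup of $G_{0}/(G_{0}\cap\G)$ via the projection to $G_{0}$. The paper's proof is exactly this two-line reduction; your extra verifications (the sublattice compatibility conditions and the injectivity of $A_{F}\hookrightarrow G_{0}/(G_{0}\cap\G)$, resting on $\pi^{-1}(U(F))=U(F)^{+}\times G_{0}$ and $\UFZ=\G\cap U(F)^{+}$) are correct elaborations of what the paper leaves implicit.
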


\begin{proof}
This follows from Lemma \ref{lem: sublattice}. 
Note that 
\begin{equation*}
\pi(U(F)_{{\Z}}')/\pi({\UFZ}) \simeq 
U(F)_{{\Z}}'/ ({\UFZ}\times (G_{0}\cap {\G})) 
\end{equation*}
is naturally a subgroup of $G_{0}/(G_{0}\cap {\G})$. 
\end{proof}

The analogy with the case of modular curves would justify the following terminology. 

\begin{definition}
We call a cusp $F$ \textit{irregular} when 
$\pi(U(F)_{{\Z}}') \ne \pi({\UFZ})$, or equivalently, 
$U(F)_{{\Z}}' \ne {\UFZ}\times (G_{0}\cap {\G})$. 
\end{definition}

By Proposition \ref{prop: irregular}, 
boundary branch divisor may arise over $F$ only when $F$ is irregular. 
It is evident that, if $F$ is regular, any $F'\succ F$ is also regular. 
It is also clear that, if ${\G}$ is neat (with $\tilde{G}$ associated to an algebraic group over ${\Q}$), 
every cusp $F$ is regular because $U(F)_{{\Z}}'={\UFZ}$.



\begin{thebibliography}{99}

\bibitem{AMRT}Ash, A.; Mumford, D.; Rapoport, M.; Tai, Y. 
\textit{Smooth compactifications of locally symmetric varieties.} 2nd edition. 
Cambridge Univ. Press, 2010. 

\bibitem{Be}Behrens, N. 
\textit{Singularities of ball quotients.} 
Geom. Dedicata \textbf{159} (2012), 389--407. 

\bibitem{GHS}Gritsenko, V.; Hulek, K.; Sankaran, G. 
\textit{The Kodaira dimension of the moduli of $K3$ surfaces.} 
Invent. Math. \textbf{169} (2007), 519--567. 

\bibitem{Ko}Kond\=o, S. 
\textit{On the Kodaira dimension of the moduli space of $K3$ surfaces.} 
Compositio Math. \textbf{89} (1993), 251--299. 

\bibitem{Ma}Ma, S. 
\textit{Irregular cusps of orthogonal modular varieties.} 
arXiv:2101.02950


\bibitem{Ta}Tai, Y. 
\textit{On the Kodaira dimension of the moduli space of Abelian varieties.} 
Invent. Math. \textbf{68} (1982), 425--439. 


\end{thebibliography}
\end{document}